\newtheorem{lemma}{Lemma}[section]
\newtheorem{theorem}{Theorem}
\newtheorem{proposition}[lemma]{Proposition}
\theoremstyle{definition}
\theoremstyle{remark}
\newtheorem{remark}{Remark}
\newcommand{\C}{\mathbb{C}}
\newcommand{\D}{\mathbb{D}}
\newcommand{\N}{\mathbb{N}}
\newcommand{\R}{\mathbb{R}}
\newcommand{\Z}{\mathbb{Z}}
\newcommand{\cC}{\mathcal{C}}
\DeclareMathOperator{\re}{Re}
\DeclareMathOperator{\im}{Im}
\renewcommand{\epsilon}{\varepsilon}
\renewcommand{\phi}{\varphi}
\DeclareMathOperator{\resit}{r\acute esit}
\newcommand{\actloc}{\cC^{\mathrm{bif}}}
\date{\today}
\begin{document}

\title[On The Support of The Bifurcation Measure]{On The Support of The Bifurcation Measure Of Cubic Polynomials}

\author[H.~Inou]{Hiroyuki Inou}
\address{Department of Mathematics, Kyoto University, Kyoto 606-8502, Japan}
\email{inou@math.kyoto-u.ac.jp}

\author[S.~Mukherjee]{Sabyasachi Mukherjee}
\address{Institute for Mathematical Sciences, Stony Brook University, NY, 11794, USA}
\email{sabya@math.stonybrook.edu}

\thanks{The first author was supported by JSPS KAKENHI Grant Numbers 26400115 and 26287016.}

\maketitle

\begin{abstract}
We construct new examples of cubic polynomials with a parabolic fixed point that cannot be approximated by Misiurewicz polynomials. In particular, such parameters admit maximal bifurcations, but do not belong to the support of the bifurcation measure.  
\end{abstract}

\tableofcontents

\section{Introduction}\label{sec_intro}

The connectedness locus $\cC$ of cubic polynomials is the set of parameters such that the corresponding Julia sets are connected. It is a compact set in the parameter space $\C^2$ of all cubic polynomials \cite{BH1}. For suitable parametrizations, the two critical points of a cubic polynomial can be holomorphically followed throughout the parameter space (see \cite{BH1,DF,Du1} for various related parametrizations). 

In \cite{Du1}, cubic polynomials were parametrized as\footnote{The chosen parametrization plays no special role in the current paper as the results of this paper are mostly coordinate-free.} $$f_{c,v}(z)=z^3- 3c^2z+ 2c^3 +v,$$ where $c,v\in\C$. The two critical points of $f_{c,v}$ are $\pm c$. The critical point $\pm c$ is said to be \emph{passive} near the parameter $(c_0,v_0)$ if the sequence of functions $(c,v)\mapsto f_{c,v}^{\circ n}(\pm c)$ forms a normal family in a neighborhood of $(c_0,v_0)$ in $\C^2$. Otherwise, $\pm c$ is said to be \emph{active} near $(c_0,v_0)$. According to \cite{Du1}, the critical point $\pm c$ is active precisely on the set $\partial\cC^{\pm}$, where $\cC^{\pm}$ is the set of parameters for which $\pm c$ has bounded orbit. Note that $\cC=\cC^+\cap\cC^-$. 

The \emph{bifurcation locus} $\actloc$ of cubic polynomials is defined as the complement of the set of all $J$-stable parameters; i.e. $\actloc$ consists of parameters for which at least one critical point is active (see \cite[Theorem 4.2]{Mcm1} for several equivalent conditions for $J$-stability). Clearly, we have that $\actloc=\partial\cC^+\cup\partial\cC^-\supset\partial\cC$. 

We denote by $\cC^*$ the intersection of the activity loci of the two critical points; i.e. $\cC^*:=\partial\cC^+\cap\partial\cC^-\subset\partial\cC$. Since $\cC^*$ is the set of parameters for which both critical points are active, it is called the \emph{bi-activity locus}.

DeMarco introduced a natural $(1,1)$-current supported exactly on the bifurcation locus \cite{DeM01, DeM03}. In \cite{BB}, Bassanelli and Berteloot constructed a natural probability measure supported on the boundary of the connectedness locus $\partial\cC$ (which is contained in the bifurcation locus), by taking a power of the bifurcation current. This measure is called the \emph{bifurcation measure}, and is denoted by $\mu_{\mathrm{bif}}$. It has several interesting dynamical properties, and can be thought of as the correct generalization of the harmonic measure of the Mandelbrot set. 
Dujardin and Favre \cite{DF} showed that the support of the bifurcation measure is equal to the closure of Misiurewicz parameters (in fact, Misiurewicz parameters are equidistributed by $\mu_{\mathrm{bif}}$), which is a subset of $\cC^*$. 

The bifurcation measure is designed to capture maximal bifurcations in the family. In this vein, one may ask if $\mathrm{Supp}(\mu_{\mathrm{bif}})$ is equal to $\cC^*$. However, it was pointed out by Douady that there are parabolic parameters in $\cC^*$ that cannot be approximated by Misiurewicz parameters \cite[Example 6.13]{DF}. These maps have a parabolic fixed point which attracts the forward orbits of both critical points. Consequently, they have a complex one-dimensional (quasi-conformal) deformation space. Moreover, any small perturbation of such a map is either parabolic (and quasi-conformally conjugate to the original map) or has at least one attracting fixed point. In other words, these parabolic parameters are \emph{parabolic-attracting} in the language of \cite{Adam1} (or \emph{virtually attracting} in the language of \cite{Buff1}).

This naturally leads to a study of the set $\cC^*\setminus\mathrm{Supp}(\mu_{\mathrm{bif}})$. The principal goal of this paper is to construct examples of parabolic-repelling parameters in $\cC^*\setminus\mathrm{Supp}(\mu_{\mathrm{bif}})$ (at a first glance, it is much less obvious that such parameters may lie outside the support of the bifurcation measure), which shows that the gap between the bi-activity locus and the support of the bifurcation measure is bigger than what was known previously.

\begin{theorem}[Parabolic-Repelling Parameters outside $\mathrm{Supp}(\mu_{\mathrm{bif}})$]\label{main_thm}
There exists an interval $I$ of parabolic-repelling parameters such that $\displaystyle I~\subset~\cC^*~\setminus~\mathrm{Supp}(\mu_{\mathrm{bif}})$. More precisely, if $a\in I$, then any sufficiently small perturbation of $f_a$ (in the cubic parameter space) is either in the escape locus, or has an attracting/parabolic fixed point.
\end{theorem}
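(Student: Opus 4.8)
The plan is to exhibit one explicit cubic $f=f_{a_0}$ having a parabolic fixed point $\beta$ of multiplier $1$ with two properties: (i) both critical points lie in the parabolic basin of $\beta$, and (ii) the repelling petal at $\beta$ is contained in the basin of infinity $A_f(\infty)$ --- so that, near $\beta$, the filled Julia set reduces to the closure of the immediate parabolic basin. Property (i) puts $f$ in $\cC^*$ and gives the one-complex-dimensional quasiconformal deformation space shared with Douady's example; property (ii) is the new feature that makes $f$ parabolic-\emph{repelling} rather than parabolic-attracting, since it forces any critical orbit emerging through the opening ``parabolic gate'' to land in the escaping region instead of back in a bounded part of the plane. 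To build such an $f$ I would work inside a well-chosen one-parameter slice of the cubic family --- a real-analytic slice, so that $I$ becomes a genuine interval --- using the quadratic $z^2+1/4$ (whose repelling petal already lies in its basin of infinity) and the local model $z\mapsto z+az^3$ as guides, or else obtain the configuration by a quasiconformal surgery (an intertwining or capture surgery) that installs at $\beta$ a second attracting petal absorbing the second critical point. The interval $I$ is then a small arc through $a_0$ inside the deformation space, along which (i) and (ii) persist.

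Granting $f$, the inclusion $f\in\cC^*$ is immediate: each critical point has bounded orbit (it lies in the parabolic basin) but can be made to escape by an arbitrarily small perturbation (see below), so each is active and $f\in\partial\cC^+\cap\partial\cC^-=\cC^*$. Everything else reduces to proving the ``parabolic-repelling'' statement itself --- that every sufficiently small perturbation $g$ of $f$ either escapes or has an attracting/parabolic fixed point. Indeed a Misiurewicz map has no attracting or parabolic cycle and does not escape, so no Misiurewicz parameter can then accumulate at $a_0$; since $\mathrm{Supp}(\mu_{\mathrm{bif}})$ is the closure of the Misiurewicz locus (Dujardin and Favre), this gives $a_0\notin\mathrm{Supp}(\mu_{\mathrm{bif}})$, and the same holds verbatim along $I$.

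To prove the dichotomy, fix $g$ near $f$ and let $\beta_1(g),\beta_2(g)$ be the two fixed points bifurcating from $\beta$, with multipliers tending to $1$. If one of them is attracting, or is parabolic (e.g.\ has multiplier a root of unity other than $1$), we are done. In the remaining case both are repelling, and parabolic implosion applies: the critical orbits of $g$ shadow those of $f$ until they enter a fixed attracting-petal neighborhood of $\beta$, and then, there being nothing to trap them, they must leave it, doing so --- in the relevant limits --- along a Lavaurs map (the composition of the attracting Fatou coordinate, a translation, and the inverse repelling Fatou coordinate), whose image lies in $\bigcup_{n\ge0}f^{\circ n}(\mathcal P_{\mathrm{rep}})\subset A_f(\infty)$ by (ii). Since the Green's function is bounded below on compact subsets of $A_f(\infty)$ and depends continuously on the polynomial, the perturbed critical orbit then sits at escape rate bounded away from $0$ for all $g$ close enough to $f$, so $g$ escapes. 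Thus a neighborhood of $a_0$ decomposes into the escape locus and the attracting/parabolic-fixed-point locus (the latter containing the deformation curve), which is exactly what is claimed.

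The principal difficulty is twofold. First, certifying property (ii) for a concrete or surgically built $f$ is real work: one must verify that the repelling petal and all of its forward images miss the filled Julia set, typically via an external-ray or Yoccoz-puzzle argument, or by tight control of the surgery. Second, and more delicate, is the case where $\beta_1(g)$ or $\beta_2(g)$ is \emph{neutral with irrational rotation number}: such a $g$ could a priori carry a Siegel disk or a Cremer hedgehog near $\beta$, fail to escape, and violate the dichotomy. Here one must use (ii) in the sharpened form that near $\beta$ the only non-escaping set is (the closure of) the immediate parabolic basin, so that its limit under such a perturbation --- a small Siegel disk, or a thin hedgehog --- cannot absorb \emph{both} critical orbits; the orbit it fails to absorb is expelled from every neighborhood of $\beta$, hence into $A_f(\infty)$, and $g$ escapes after all. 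Turning this into a uniform statement over all small perturbations is the technical heart of the proof.
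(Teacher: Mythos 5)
Your overall architecture (perturb a multiplier-$1$ parabolic cubic whose two critical points fill the parabolic basin, use the basin of infinity near the fixed point to force escape, and invoke Dujardin--Favre to conclude $a_0\notin\mathrm{Supp}(\mu_{\mathrm{bif}})$) matches the paper's, but your key property (ii) cannot hold and the dichotomy argument built on it has a genuine hole. If both critical points lie in the immediate parabolic basin, then the two ends of the repelling Ecalle cylinder necessarily project into that basin --- this is precisely where the horn maps $\mathfrak{h}^\pm$ are defined --- so the repelling petal always meets the filled Julia set near its upper and lower ends; it is never contained in $A_f(\infty)$. What is true, and what the paper proves, is only that a \emph{middle} round sub-annulus of the repelling cylinder, of modulus $m=\pi/\ln 3-\tfrac12\approx 2.36$, lies in the projection of the basin of infinity. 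Because of this, your claim that a critical orbit transiting through the gate ``must'' land in $A_f(\infty)$ fails: the transit map is $(\psi^{\mathrm{rep}})^{-1}\circ T_\sigma\circ\psi^{\mathrm{att}}$ for a lifted phase $\sigma$, and the critical point of attracting Ecalle height $-t/2$ reappears at repelling height $\approx -t/2+\im(\sigma)$. For perturbations with $|\im(\sigma)|$ large this lands near an end of the cylinder, outside the escaping annulus, and the critical point does not escape even though both bifurcating fixed points are repelling. Nothing in your argument rules these perturbations out.

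The missing idea is that a large imaginary lifted phase \emph{itself} produces an attracting fixed point: the return map at an end of the repelling cylinder is close to $e^{2\pi i\sigma}\mathfrak{h}^\pm$, and $(\mathfrak{h}^+)'(0)=e^{2\pi^2\resit(f_a)}$, so $\im(\sigma)>\pi\resit(f_a)$ forces an attracting fixed point. The theorem then reduces to arithmetic: one needs the escape window and the attracting-fixed-point window to cover all phases, i.e.\ $m/2+t/2>\pi\resit(f_a)$ where $t$ is the critical Ecalle height; this is achievable because $\resit(f_a)<2/3$ on $\mathrm{Per}_1(1)\cap\R$, so any $t\in(4\pi/3-m,\,m)$ works (a nonempty interval since $m\approx 2.36>4\pi/3-m\approx 1.83$). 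This quantitative interplay between the critical Ecalle height, the modulus of the escaping annulus, and the residue fixed point index is the actual content of the proof and is entirely absent from your proposal; without it the case ``both fixed points repelling but $|\im(\sigma)|$ large'' is unaddressed, and that is exactly the case a hypothetical nearby Misiurewicz parameter would occupy. (Your concern about Siegel/Cremer perturbations is comparatively minor: such maps are not Misiurewicz, so they cause no trouble for the support statement.)
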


The interval $I$ satisfying the statement of Theorem~\ref{main_thm} consists of parameters having a parabolic fixed point of multiplier $1$. In other words, $I$ is contained in the slice $\mathrm{Per}_1(1)$ (see the definition below). 

Note that every parabolic-attracting parameter is contained in $\cC^*~\setminus~\mathrm{Supp}(\mu_{\mathrm{bif}})$. On the other hand, by employing parabolic perturbation techniques, we show that a (suitably chosen) parabolic-repelling parameter can be approximated by Misiurewicz parameters only if two dynamically defined conformal invariants associated with the map satisfy a certain condition. The interval $I$ in Theorem~\ref{main_thm} is concocted so that the two conformal invariants of the corresponding maps violate this condition.

Every parameter in $\partial\cC$ near $I$ has an attracting or parabolic cycle. Moreover, each of these nearby parameters admits a disk of quasi-conformal deformations. On the other hand, parameters outside $\cC$ close to $I$ have at least one escaping critical point. Hence, by the usual wringing deformation (see \cite{BH1}), such parameters also admit quasi-conformal deformations. It follows that the interval $I$ does not intersect the closure of quasi-conformally rigid parameters.

According to \cite[Proposition 6.14]{DF}, the support $\mathrm{Supp}(\mu_{\mathrm{bif}})$ of the bifurcation measure is the Shilov boundary of $\cC$. Heuristically speaking, the interval $I$ that we construct in Theorem~\ref{main_thm} does not lie in the Shilov boundary of $\cC$ since $\partial\cC$ is foliated by holomorphic disks (coming from quasi-conformal deformations described above) locally near $I$.

Let us now describe the organization of the paper and the key ideas of the proof of the main theorem. 

In Section~\ref{slice}, we recall some basic properties of cubic polynomials with a parabolic fixed point of multiplier $1$. These maps form a complex one-dimensional slice (of the parameter space $\C^2$ of all cubic polynomials) which is denoted by $\mathrm{Per}_1(1)$. We focus on `real' maps in $\mathrm{Per}_1(1)$ for which the two critical points are complex conjugates of each other and both critical orbits converge to the unique parabolic fixed point. Both critical points of the maps under consideration are active; i.e. these maps belong to the bi-activity locus $\cC^*$. We associate a `global' conformal conjugacy invariant (called the \emph{critical Ecalle height}) with these maps, that can be used as a local parameter for the quasi-conformal deformation space of these maps. We also recall the notion of the residue fixed point index of a parabolic map, which is a `local' conformal conjugacy invariant of parabolic dynamics. These two invariants play a crucial role in the proof of the main theorem.

Section~\ref{para_perturb} contains a brief discussion of perturbation of parabolic points. We go over some basic properties of persistent Fatou coordinates, horn maps, and lifted phase for perturbations of parabolic maps.

The final Section~\ref{proof_main_thm} is devoted to the proof of Theorem~\ref{main_thm}. In Lemma~\ref{height_para_rep}, we show that if the critical Ecalle height of a real map in $\mathrm{Per}_1(1)$ (introduced in Section~\ref{slice}) is not too large, then the map is parabolic-repelling. Finally, a careful analysis of the lifted phase of the perturbed maps shows that if the critical Ecalle height of a parabolic-repelling parameter is not too small (i.e. bounded below by a function of the residue fixed point index), every perturbation of such a map either has an attracting fixed point or has an escaping critical point. This yields an interval of real parabolic-repelling parameters in $\mathrm{Per}_1(1)$ that cannot be approximated by Misiurewicz parameters, and completes the proof of the theorem.

\section{The Slice $\mathrm{Per}_1(1)$}\label{slice}

The family of cubic polynomials with a parabolic fixed point of multiplier $1$ is denoted by $\mathrm{Per}_1(1)$, and can be parametrized as $$\mathrm{Per}_1(1):=\{f_a(z)=z+az^2+z^3:a\in\C\}.$$ This family has been studied in \cite{Pas}. We only recall some basic facts about $\mathrm{Per}_1(1)$ that we will need in this paper.
 
If $a\in\R$, then $f_a$ commutes with the complex conjugation map. It is easy to see that for $a\in(-\sqrt{3},\sqrt{3})$, the two critical points of $f_a$ are complex conjugate. We denote the critical point in the lower (respectively, upper) half plane by $c_{-}(a)$ (respectively, $c_{+}(a)$). 
 
For $a=0$, the corresponding map has a double parabolic fixed point at the origin (i.e. it has two attracting petals), while for $a\neq0$, the corresponding map has a simple parabolic fixed point at the origin. The two immediate basins of $f_0$ are real-symmetric, and each basin contains a critical point.

It follows by real symmetry that for $a\in(-\sqrt{3},\sqrt{3})\setminus\{0\}$, both critical points of $f_a$ lie in the unique immediate basin of the parabolic fixed point. In particular, the parabolic basin is connected, and the Julia set is a Jordan curve.

For $a=\pm\sqrt{3}$, the two critical points of $f_a$ coalesce to form a double critical point. Finally, if $a\in(-\infty,-\sqrt{3})\cup(\sqrt{3},+\infty)$, then the two critical points of $f_a$ are real.

Let $\mathcal{I}:=(0,\sqrt{3})$. For all $a\in\mathcal{I}$, we will normalize the attracting (respectively, repelling) Fatou coordinate $\psi^{\mathrm{att}}_a$ (respectively, $\psi^{\mathrm{rep}}_a$) of $f_a$ at the parabolic fixed point $0$ such that $\psi^{\mathrm{att/rep}}_a$ commutes with complex conjugation. Since Fatou coordinates are unique up to addition of a complex constant, the above normalization implies that $\psi^{\mathrm{att/rep}}_a$ is unique up to horizontal translations. Therefore, the imaginary part of $\psi^{\mathrm{att/rep}}_a$ is well-defined. We will refer to $\im(\psi^{\mathrm{att/rep}}_a)$ as the attracting/repelling Ecalle height.

In particular, we have that $$\re(\psi^{\mathrm{att}}_a(c_{+}(a)))=\re(\psi^{\mathrm{att}}_a(c_{-}(a))),\ \mathrm{and}\ \im(\psi^{\mathrm{att}}_a(c_{\pm}(a)))=\pm h_a/2,$$ for some $h_a>0$. Moreover, $$h_a=\psi^{\mathrm{att}}_a(c_{+}(a))-\psi^{\mathrm{att}}_a(c_{-}(a))$$ is a conformal conjugacy invariant of $f_a$. We call $h_a$ the \emph{critical Ecalle height} of $f_a$.

By changing $h_a$ using a quasi-conformal deformation argument, we will show that all maps on $\mathcal{I}$ are quasi-conformally conjugate. 

\begin{proposition}\label{para_arc}
All cubic polynomials $f_a$, where $a\in\mathcal{I}$, are quasi-conformally conjugate. Moreover, $\mathcal{I}$ admits a real-analytic parametrization $a:(0,+\infty)\to~\mathcal{I}$ such that $h_{a(t)}=t$.
\end{proposition}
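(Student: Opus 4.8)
The plan is to realize the variation of the critical Ecalle height $h_a$ through an explicit quasi-conformal surgery supported in the parabolic basin, and then promote the resulting family of conjugacies into a real-analytic reparametrization of $\mathcal{I}$. First I would fix a base parameter $a_0 \in \mathcal{I}$ and work inside the attracting Fatou coordinate $\psi^{\mathrm{att}}_{a_0}$, which conjugates $f_{a_0}$ on (a sub-cylinder of) the parabolic basin to the translation $w \mapsto w+1$ on a right half-plane, descending to the Ecalle--Voronin cylinder $\C/\Z$. On this cylinder I would build, for each real parameter $s$ near $0$, a quasi-conformal homeomorphism that commutes with complex conjugation (hence fixes the real line setwise) and that implements a vertical affine stretch $\im(w) \mapsto (1+s)\im(w)$ on the strip containing the critical values' images, interpolated back to the identity near the ends of the cylinder; the key point is that such a map moves the two critical points of Ecalle heights $\pm h_{a_0}/2$ to heights $\pm (1+s)h_{a_0}/2$. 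Pulling this Beltrami form back through $\psi^{\mathrm{att}}_{a_0}$ and spreading it over the grand orbit of the basin by the dynamics produces an $f_{a_0}$-invariant Beltrami coefficient $\mu_s$ on $\C$, with $\mu_s = 0$ outside the basin (in particular on the Julia set, which has measure zero), and depending real-analytically on $s$.

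Next I would invoke the Measurable Riemann Mapping Theorem: let $\Phi_s$ be the normalized quasi-conformal integrating map for $\mu_s$, chosen to commute with complex conjugation (possible since $\mu_s$ does) and normalized at infinity and at the parabolic fixed point so that $g_s := \Phi_s \circ f_{a_0} \circ \Phi_s^{-1}$ is again a monic centered cubic polynomial with a parabolic fixed point of multiplier $1$ at $0$ whose critical points are complex conjugate — i.e. $g_s = f_{a(s)}$ for a unique $a(s) \in \mathcal{I}$ (one should check the normalization pins down the cubic uniquely, using that $\mathrm{Per}_1(1)$ is exactly the monic centered cubics tangent to the identity at $0$). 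Since $\Phi_s$ conjugates dynamics and transports Fatou coordinates, the critical Ecalle height transforms by $h_{a(s)} = (1+s)\,h_{a_0}$; this is the heart of the construction and shows $a(s)$ is a quasi-conformal conjugacy moving $h$ by any prescribed positive factor, so all maps on $\mathcal{I}$ are quasi-conformally conjugate and the map $h \colon \mathcal{I} \to (0,+\infty)$, $a \mapsto h_a$, is surjective. The real-analytic dependence of $\Phi_s$ on the parameter $s$ (Ahlfors--Bers) gives that $s \mapsto a(s)$ is real-analytic; running this from a fixed base point and rescaling the parameter $s$ shows $h$ is a real-analytic map with real-analytic local inverse, hence a real-analytic diffeomorphism $\mathcal{I} \to (0,+\infty)$. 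Setting $a(t) := h^{-1}(t)$ gives the desired parametrization with $h_{a(t)} = t$.

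The main obstacle I expect is the bookkeeping around the normalizations: one must verify that the integrated map $\Phi_s$ can be normalized so that the conjugated polynomial lands back inside the specific one-parameter slice $\mathrm{Per}_1(1)$ (not merely in the full cubic connectedness locus), that this normalization is unique and depends analytically on $s$, and that the complex-conjugation symmetry is preserved throughout so that $a(s)$ stays real. A secondary technical point is checking that the surgery genuinely changes $h$ monotonically and by the exact factor claimed — this requires that the vertical stretch on the Ecalle cylinder interacts correctly with the two symmetric critical values and that the interpolation near the cylinder ends does not disturb their heights; showing $dh/ds \neq 0$ is what upgrades surjectivity of $h$ to a diffeomorphism and is essential for the real-analytic reparametrization.
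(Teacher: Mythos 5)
Your construction is the same as the paper's: a vertical affine stretch of the attracting Ecalle cylinder, transported to a $f_{a_0}$-invariant Beltrami form on the parabolic basin (zero elsewhere), integrated by the Measurable Riemann Mapping Theorem with the conjugation symmetry preserved, so that the resulting polynomial lies again in the real part of $\mathrm{Per}_1(1)$ and has critical Ecalle height rescaled by the prescribed factor. (Interpolating the stretch to the identity near the ends of the cylinder is unnecessary --- the global affine stretch already has bounded dilatation --- but harmless.) Up to that point your argument is sound and matches the paper.

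The genuine gap is at the end: you never show that the image of the parametrization $t\mapsto a(t)$ is all of $\mathcal{I}$. The surgery based at $a_0$ produces a real-analytic injective curve $a(\cdot)$ in $\mathcal{I}$ realizing every Ecalle height, but a priori this could be a proper subinterval of $\mathcal{I}$; a map $f_{a_1}$ with $a_1$ outside that image would not yet be known to be quasi-conformally conjugate to $f_{a_0}$, even if it shares the Ecalle height of some $a(t)$. Your inference ``$h$ is surjective, has a real-analytic local inverse, hence is a diffeomorphism $\mathcal{I}\to(0,+\infty)$'' presupposes exactly the missing fact, namely global injectivity of $h$ on $\mathcal{I}$ (equivalently, surjectivity of $a(\cdot)$), which is a rigidity statement that does not follow from having constructed one QC deformation family. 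The paper closes this by analyzing the endpoint behavior of the curve: as $t\to 0^+$ the two critical points must merge, forcing $a(t)\to\sqrt{3}$, and as $t\to+\infty$ any accumulation point must be a double parabolic parameter, forcing $a(t)\to 0$; since $a(\cdot)$ is continuous and injective on an interval, its image is then the whole of $(0,\sqrt{3})=\mathcal{I}$. You need some argument of this kind (or an independent proof that the critical Ecalle height separates points of $\mathcal{I}$) to obtain both assertions of the proposition.
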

\begin{proof}
The proof is similar to \cite[Theorem 3.2]{MNS}.

Pick $a_0\in\mathcal{I}$ such that $h_{a_0}=t_0$. We can choose the attracting Fatou coordinate $\psi_{a_0}^{\mathrm{att}}$ so that both the critical points $c_{\pm}(a_0)$ have real part $1/2$ within the Ecalle cylinder.

Let $\zeta=x+iy$. Now, for every $t\in(0,+\infty)$, the map 
\[
 \ell_t : (x,y)\longmapsto \left(x,\frac{t}{t_0}y\right)
\]

is a quasi-conformal homeomorphism of $\C/\Z$ commuting with complex conjugation. Note that $\ell_t(1/2,\pm t_0/2) = (1/2,\pm t/2)$. Translating the map $\ell_t$ by positive integers, we obtain a quasi-conformal map $\ell_t$ commuting with $\zeta\mapsto\zeta+1$ on a right half plane. 

By the coordinate change $\psi_{a_0}^{\mathrm{att}}:z\mapsto \zeta$, we can transport this Beltrami form (defined by the quasi-conformal homeomorphism $\ell_t$) into the attracting petal at $0$ such that it is forward invariant under $f_{a_0}$. Pulling it back by the dynamics, we can spread the Beltrami form to the entire parabolic basin. Extending it by the zero Beltrami form outside of the parabolic basin, we obtain an $f_{a_0}$-invariant Beltrami form. Moreover, this Beltrami form respects the complex conjugation map.

The Measurable Riemann Mapping Theorem (with parameters) now supplies a quasi-conformal map $\phi_t$ integrating this Beltrami form such that $\phi_t$ commutes with complex conjugation. We can normalize $\phi_t$ such that it fixes $0$ and $\infty$. Then, $\phi_t$ conjugates $f_{a_0}$ to a cubic polynomial fixing the origin. We can further require that the conjugated polynomial $\phi_t\circ f_{a_0}\circ\phi_t^{-1}$ is monic. By \cite{Na}, $\phi_t\circ f_{a_0}\circ\phi_t^{-1}$ has a simple parabolic fixed point of multiplier $1$ at the origin. Hence, $\phi_t\circ f_{a_0}\circ\phi_t^{-1}\in\mathrm{Per}_1(1)$.

Since $\phi_t$ commutes with complex conjugation, it follows that $\phi_t\circ f_{a_0}\circ\phi_t^{-1}$ is a real cubic polynomial. Furthermore, since the complex conjugation map exchanges the two distinct critical points of $f_{a_0}$, the same must be true for $\phi_t\circ f_{a_0}\circ\phi_t^{-1}$ as well. It follows that $\phi_t\circ f_{a_0}\circ\phi_t^{-1}=f_{a(t)}$, for some $a(t)\in\mathcal{I}$.

The attracting Fatou coordinate of $f_{a(t)}$ is given by $\psi_{a(t)}^{\mathrm{att}}= \ell_t\circ\psi_{a_0}^{\mathrm{att}}\circ\phi_t^{-1}$. Thus, $\im\psi^{\mathrm{att}}_{a(t)}(c_{\pm}(a(t)))=\pm t/2$, and hence $h_{a(t)}=t$.

Note that the Beltrami form constructed above depends real-analytically on $t$, so the parameter $a(t)$ depends real-analytically on $t$ as well. Therefore, we obtain a real-analytic map $a:(0,+\infty)\to\mathcal{I}$. Since the critical
points of all $f_{a(t)}$ have different Ecalle heights, which is a conformal invariant, this map is injective. 

It remains to show that $a((0,+\infty))=\mathcal{I}$. As $t\to0$, the two critical points of $f_{a(t)}$ tend to merge together. It follows that $\displaystyle\lim_{t\to0^+}a(t)=\sqrt{3}$. On the other hand, any accumulation point of $a(t)$ as $t\to+\infty$ must be a double parabolic parameter. Hence, $\displaystyle\lim_{t\to+\infty}a(t)=0$. Since $a((0,+\infty))$ is connected, it follows that $a((0,+\infty))=(0,\sqrt{3})=\mathcal{I}$.
\end{proof}

The next lemma shows that the arc $\mathcal{I}$ is contained in the bi-activity locus $\cC^\ast$.

\begin{lemma}\label{para_in_bif_locus}
$\mathcal{I}\subset\cC^\ast$.
\end{lemma}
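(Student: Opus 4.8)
The plan is to show that for every $a \in \mathcal{I}$, both critical points of $f_a$ are active, i.e.\ $a \in \partial\cC^+ \cap \partial\cC^-$. Since $a \in \mathcal{I}$ means $f_a$ has a parabolic fixed point whose basin contains both critical orbits, we certainly have $a \in \cC^+ \cap \cC^- = \cC$, so both critical orbits are bounded. The task is therefore to show that $a$ lies on the \emph{boundary} of $\cC^+$ and of $\cC^-$; equivalently (by the characterization of activity recalled from \cite{Du1} in the introduction), to show that neither critical point is passive at $a$, i.e.\ the family $(c,v) \mapsto f_{c,v}^{\circ n}(\pm c)$ fails to be normal in any neighborhood of $a$ inside $\C^2$.

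First I would recall the standard mechanism: a parabolic parameter at which a critical orbit is attracted to the parabolic cycle is always an activity (bifurcation) parameter for that critical point. The reason is that the parabolic fixed point is not persistent: an arbitrarily small perturbation turns the multiplier-$1$ fixed point into either an attracting fixed point with a small immediate basin, or a repelling fixed point, and in the latter case a pair of fixed points (or a nearby cycle) bifurcates out. Concretely, one can perturb $f_a$ within the cubic family so that the parabolic point becomes repelling and the critical point, previously converging to the parabolic point, now escapes to infinity (or lands in a different part of the dynamical plane); meanwhile $f_a$ itself keeps the critical orbit bounded. Thus in any neighborhood of $a$ there are parameters where $f_{c,v}^{\circ n}(\pm c) \to \infty$ and parameters (namely $a$ itself, and all of $\mathcal{I}$ by Proposition \ref{para_arc}) where the orbit stays bounded. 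This precludes normality of $(c,v)\mapsto f_{c,v}^{\circ n}(\pm c)$ near $a$, so $\pm c$ is active at $a$; hence $a \in \partial\cC^{\pm}$.

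To make the perturbation argument clean I would invoke the material promised in Section \ref{para_perturb}: under perturbation of a parabolic fixed point, Fatou coordinates persist on large regions, and by moving the parameter transversally to $\mathrm{Per}_1(1)$ one opens the parabolic point into a repelling one with a ``gate'' through which orbits pass; after finitely many iterates the (perturbed) critical point leaves any fixed compact neighborhood of the parabolic basin. One must check the transversality/openness: that the cubic family genuinely contains perturbations of $f_a$ for which the fixed point at (the continuation of) $0$ becomes repelling — this is immediate since $\mathrm{Per}_1(1)$ is a proper one-dimensional slice of the two-dimensional cubic parameter space, so generic perturbations leave it and change the multiplier away from $1$. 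Combining with the fact that $\mathcal{I} \subset \cC$ gives bounded critical orbits at $a$ itself, non-normality follows, and symmetrically for both critical points because they are complex conjugate (so whatever happens to one happens to the conjugate of the other under the conjugate perturbation).

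The main obstacle — really the only non-formal point — is ruling out the ``bad'' case where \emph{every} small perturbation keeps the critical orbit bounded, which would happen if the perturbation always produced an attracting fixed point capturing the critical point. This is exactly the parabolic-attracting phenomenon that the introduction warns about, and here it would break the argument. The resolution is that we do not need \emph{every} perturbation to release the critical point; we only need \emph{some} sequence of perturbations $a_n \to a$ along which $f_{a_n}^{\circ N_n}(\pm c)$ escapes (or at least leaves a fixed compact set forever), together with boundedness at $a$. Producing such a sequence is standard parabolic-implosion input: perturb the multiplier to $e^{2\pi i \theta_n}$ with $\theta_n \to 0$ rational (or irrational of bounded type), so the parabolic point becomes repelling/elliptic and the former parabolic basin is destroyed, forcing the critical orbit out. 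I expect this to be dispatched in a few lines citing the perturbation theory of Section \ref{para_perturb}, so that the proof of Lemma \ref{para_in_bif_locus} reduces to: bounded critical orbits at $a$ (from $\mathcal{I}\subset\cC$) $+$ a perturbation with an escaping critical point $\Rightarrow$ non-normality $\Rightarrow$ activity of both critical points $\Rightarrow$ $a \in \cC^* = \cC^{\mathrm{bif}}\cap\partial\cC$.
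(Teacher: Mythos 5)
There is a genuine gap. Your reduction of activity to ``boundedness at $a$ plus arbitrarily small perturbations with an escaping (or compactly leaving) critical orbit'' is fine in principle, but the mechanism you offer for producing such perturbations --- perturb the multiplier to $e^{2\pi i\theta_n}$ so the fixed point becomes repelling or elliptic, ``destroying the basin and forcing the critical orbit out'' --- is a non sequitur. Destruction of the parabolic basin does not force any critical orbit to leave a compact set: the perturbed critical orbit can perfectly well be captured by a newly created attracting or parabolic fixed point, or accumulate on a Siegel disk when $\theta_n$ is of bounded type. This is not a hypothetical worry: the interval $\mathcal{I}$ contains the parabolic-attracting sub-interval $a\in(0,1)$, where (by Buff's theorem, invoked at the start of Section \ref{proof_main_thm}) \emph{every} sufficiently small perturbation has a non-repelling fixed point, so the ``bad'' case you try to dismiss is exactly the generic behaviour there; exhibiting escaping perturbations in that regime would require a genuinely quantitative argument (of the type carried out in Lemma \ref{height_para_rep}, which compares the critical Ecalle height with the modulus of the basin-of-infinity annulus and only works for $t<m-2\epsilon$, i.e.\ on the parabolic-repelling part). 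As written, your proof does not cover all of $\mathcal{I}$.

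The paper avoids this difficulty entirely with a soft argument that never needs an escaping parameter: assuming normality of $g\mapsto g^{\circ n}(c_+(g))$ on a neighborhood $U$ of $\tilde a$, and using that $U$ meets the period-one hyperbolic component, the critical orbit converges for every $g\in U$ to a fixed point $w(g)$ which depends holomorphically on $g$; its multiplier is then holomorphic and equals $1$ at $f_{\tilde a}$, so by the maximum modulus principle it has modulus $>1$ on an open subset of $U$, contradicting the fact that an orbit cannot converge non-trivially to a repelling fixed point. Real symmetry then gives activity of the conjugate critical point, as in your sketch. (Two smaller points: the lemma only claims $\mathcal{I}\subset\cC^{\mathrm{bif}}$, so activity of one critical point already suffices here, while bi-activity is what is used later for $\cC^*$; and your closing identity $\cC^*=\cC^{\mathrm{bif}}\cap\partial\cC$ is not correct --- $\cC^*$ is $\partial\cC^+\cap\partial\cC^-$, in general a proper subset of $\partial\cC$.)
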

\begin{proof}
Let $\tilde{a}\in\mathcal{I}$. For cubic polynomials $g$ close to $f_{\tilde{a}}$, we mark the two critical points of $g$ by $c_\pm(g)$. 

We will prove the lemma by contradiction. To this end, let us assume that there is an open neighborhood $U$ of $\tilde{a}$ (in the full cubic parameters space $\C^2$) such that the sequence of holomorphic functions $\{U\ni g\mapsto g^{\circ n}(c_+(g))\}_{n\in\N}$ forms a normal family. Note that $U$ intersects a hyperbolic component of period one non-trivially such that for these hyperbolic polynomials, both critical orbits converge to a common attracting fixed point. By normality of the above family of functions, it follows that the forward orbit of $c_+(g)$ converges to a fixed point $w(g)$ (of $g$) for every map $g$ in $U$. Moreover, $w(g)$ is a holomorphic function of $g$ on $U$ (as it is a limit of holomorphic functions). Therefore, the multiplier $g'(w(g))$ of the fixed point is also a holomorphic function of $g$ in $U$. Since the multiplier of the fixed point of $f_{\tilde{a}}$ is $1$, it follows by the maximum modulus principle that $w(g)$ must be a repelling fixed point for an open set of maps in $U$. However, this is impossible as an orbit cannot non-trivially converge to a repelling fixed point. 

Since $\tilde{a}$ is real and $c_\pm(f_{\tilde{a}})$ are complex conjugate,
$c_+(f_{\tilde{a}})$ is active if and only if $c_-(f_{\tilde{a}})$ is.

Hence, both critical points of $f_{\tilde{a}}$ are active. Thus, $\mathcal{I}\subset\cC^\ast$.

\end{proof}

The \emph{residue fixed point index} of $f_a$ at the parabolic fixed point $0$ is defined to be the complex number
\begin{align*}
\displaystyle \iota(f_a,0) &= \frac{1}{2\pi i} \oint \frac{dz}{z-f_a(z)},
\end{align*}
where we integrate in a small loop in the positive direction around $0$. A simple computation shows that $\iota(f_a,0) =1/a^2$ (when $a\neq0$). The fixed point index is a conformal conjugacy invariant (see \cite[\S 12]{M1new} for a general discussion on the concept of residue fixed point index). 

The \emph{r{\'e}sidu it{\'e}ratif} of the parabolic fixed point $0$ of $f_a$ is defined as $1-\iota(f_a,0)=1-1/a^2$. It is denoted by $\resit(f_a)$. This quantity plays an important role in the study of perturbation of parabolic germs.

The origin is called a parabolic-attracting (respectively, parabolic-repelling) fixed point of $f_a$ if $\re(\resit(f_a))<0$ (respectively, if $\re(\resit(f_a))>0$). Clearly, for $a\in(0,1)$, the origin is a parabolic-attracting fixed point of $f_a$. On the other hand, for $a\in(1,\sqrt{3})$, the origin is a parabolic-repelling fixed point of $f_a$.

\begin{lemma}[Upper Bound on $\resit(f_a)$]\label{upper_bound}
For all $a\in\mathcal{I}$, we have that $\resit(f_a)<2/3$.
\end{lemma}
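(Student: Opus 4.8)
The plan is to reduce the statement to the closed form for the résidu itératif already recorded just above. By definition $\resit(f_a) = 1 - \iota(f_a,0)$, and the computation in the text gives $\iota(f_a,0) = 1/a^2$ for $a \neq 0$, hence $\resit(f_a) = 1 - 1/a^2$. Since every $a \in \mathcal{I}$ is a nonzero real number with $a < \sqrt{3}$, we have $0 < a^2 < 3$, so $1/a^2 > 1/3$, and therefore $\resit(f_a) = 1 - 1/a^2 < 1 - 1/3 = 2/3$. This is exactly the asserted bound, so the whole argument is a one-line estimate once the formula for $\resit$ is in hand.

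For completeness I would recall why $\iota(f_a,0) = 1/a^2$. One has $z - f_a(z) = -az^2 - z^3 = -z^2(a+z)$, so
\[
\iota(f_a,0) = \frac{1}{2\pi i}\oint \frac{dz}{-z^2(a+z)}
\]
equals the residue at $z = 0$ of $-1/\bigl(z^2(a+z)\bigr)$; expanding $\tfrac{1}{a+z} = \tfrac{1}{a} - \tfrac{z}{a^2} + O(z^2)$ near the origin shows that the coefficient of $1/z$ is $1/a^2$. As this is precisely the identity already stated in the text, strictly speaking nothing beyond the displayed estimate is needed.

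There is no genuine obstacle here: the lemma follows immediately from the explicit expression $\resit(f_a) = 1 - 1/a^2$ together with the defining inequality $a < \sqrt{3}$ for the arc $\mathcal{I}$. The substantive role of the statement is downstream — it isolates the numerical threshold $2/3$ against which the critical Ecalle height will be weighed in the proof of Theorem \ref{main_thm}.
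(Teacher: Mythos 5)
Your proof is correct and is essentially identical to the paper's: both use the formula $\resit(f_a)=1-1/a^2$ together with $a^2<3$ on $\mathcal{I}=(0,\sqrt{3})$ to get $1/a^2>1/3$ and hence $\resit(f_a)<2/3$. The residue computation you include as a check is also correct, though the paper simply cites the formula stated earlier in the text.
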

\begin{proof}
If $a\in\mathcal{I}=(0,\sqrt{3})$, then $1/a^2>1/3$. Hence, $\resit(f_a)=(1-1/a^2)<2/3$.
\end{proof}

\section{Perturbation of Parabolic Points}\label{para_perturb}

In this section, we will recall some basic facts on perturbation of parabolic points, and fix the terminologies for the rest of the paper.

If a map $f_a$ (with $a\in\mathcal{I}$) is perturbed outside of $\mathrm{Per}_1(1)$, the simple parabolic fixed point $0$ bifurcates into two simple fixed points each of which has multiplier close to $1$. We will only be concerned with perturbations for which $\arg(\lambda-1)\in\left[\frac{\pi}{4},\frac{3\pi}{4}\right]\cup\left[\frac{5\pi}{4},\frac{7\pi}{4}\right]$, where $\lambda$ is the multiplier of one of the two bifurcating fixed points (other perturbations always create an attracting fixed point, and are uninteresting for our purpose, see \cite[\S 3.1]{Shi} for further details).

Let us now briefly describe the dynamics near the bifurcated fixed points for the class of perturbed maps under consideration. In the dynamical plane of such a perturbed map, there is a curve joining these two fixed points, which is called the \emph{gate}. Moreover, there exist an attracting domain $V^\mathrm{att}$, and a repelling domain $V^\mathrm{rep}$ having the two simple fixed points on their boundaries. The points in the attracting domain eventually transit through the gate, and escape to the repelling domain. Moreover, there are injective holomorphic maps $\psi^\mathrm{att/rep}$ defined on $V^\mathrm{att/rep}$ conjugating the dynamics to translation by $+1$ (as long as the orbit stays in the domain of definition of the maps). The maps are referred to as persistent Fatou coordinates. The quotient of $V^\mathrm{att/rep}$ by the dynamics is a bi-infinite cylinder, which is denoted by $C^\mathrm{att/rep}$.

The class of perturbed maps considered above are said to exhibit \emph{eggbeater dynamics} (see \cite[Figures~4,5]{Shi}).

There exists an open set $U$ in the cubic parameter space with $\mathcal{I}\subset\partial U$ such that every map in $U$ exhibits eggbeater dynamics and admits persistent Fatou coordinates as above (compare \cite[Proposition 3.2.2, Proposition 3.2.3]{Shi}). We will only consider perturbations of $f_a$ in $U$. It makes sense to label the critical points of the perturbed maps as $c_\pm$ so that $\im(c_+)>0$, and $\im(c_-)<0$.
 
The \emph{lifted horn maps} of the parabolic fixed point of $f_a$ are defined as $H_a^\pm=\psi_a^{\mathrm{att}}\circ\left(\psi_a^{\mathrm{rep}}\right)^{-1}$ on regions with sufficiently large imaginary part in the repelling Fatou coordinates. More precisely, $H_a^+$ (respectively, $H_a^-$) is defined on $\lbrace\im(\zeta)>M\rbrace$ (respectively, on $\lbrace\im(\zeta)<-M\rbrace$) for some sufficiently large positive $M$. 

By our normalization of Fatou coordinates, we have that $$\psi_{a}^{\mathrm{att}}(z)-\psi_{a}^{\mathrm{rep}}(z)\approx\mp i\pi \resit(f_a),$$ as $z$ tends to the upper/lower end of the cylinders. It follows that $$H_a^\pm(\zeta)\approx \zeta\mp  i\pi \resit(f_a)$$ as $\im(\zeta)\to\pm\infty$. The map $\mathrm{exp}:\zeta\mapsto e^{2\pi i\zeta}$ conjugates the lifted horn maps $H_a^\pm$ to a pair of germs $\mathfrak{h}_a^\pm$ fixing $0$ and $\infty$ respectively. These maps are called the \emph{horn maps} of the parabolic fixed point of $f_a$. They satisfy $$\left(\mathfrak{h}_a^+\right)'(0)=e^{2\pi^2\resit(f_a)}= \left(\mathfrak{h}_a^-\right)'(\infty).$$

For perturbed maps, one can still define horn maps. Points in $V^\mathrm{rep}$ with large imaginary part in the repelling Fatou coordinate eventually land in $V^\mathrm{att}$. This defines a map from the ends of $\psi^\mathrm{rep}(V^\mathrm{rep})$ to $\psi^\mathrm{att}(V^\mathrm{att})$, which is called the lifted horn map of the perturbed map. The persistent Fatou coordinates can be normalized so that they depend continuously on the parameters. With such normalizations, the horn maps depend continuously on the parameters.

In the perturbed situation, points in $V^\mathrm{att}$ are mapped to $V^\mathrm{rep}$ by some large iterate of the dynamics (where the required number of iterations tends to $+\infty$ as the perturbation goes to zero). This \emph{transit map} induces an isomorphism of the cylinder $\C/\Z$ (via the Fatou coordinates). Hence, the transit map can be written as $\left(\psi^\mathrm{rep}\right)^{-1}\circ T_\sigma\circ\psi^{\mathrm{att}}$, where $T_\sigma$ is translation by some complex number $\sigma$. The complex number $\sigma$ is called the \emph{lifted phase} of the perturbed map.

Finally, one can define a \emph{return map} from the top and bottom ends of $V^\mathrm{rep}$ to $V^\mathrm{rep}$ for the perturbed maps. In the Fatou coordinates, this map can be expressed as the composition of the lifted horn map and the translation $T_\sigma$ (for some $\sigma\in\C$). For sufficiently small perturbations, $\mathrm{exp}:\zeta\mapsto e^{2\pi i\zeta}$ conjugates these return maps to germs $\mathcal{R}^\pm$ that are close to $e^{2\pi i\sigma}\mathfrak{h}_a^\pm$.

\section{Proof of Theorem~\ref{main_thm}}\label{proof_main_thm}

Let $a\in(0,1)$. Then $0$ is a parabolic-attracting fixed point of $f_a$. By \cite[Theorem 1]{Buff1} (also compare \cite[Theorem 12.10]{M1new}), every cubic polynomial sufficiently close to $f_a$ has at least one non-repelling fixed point. Thus, $f_a$ cannot be approximated by Misiurewicz maps. Following \cite[Example 6.13]{DF}, one can conclude that the parabolic-attracting map $f_a$ lies in $\cC^*\setminus\mathrm{Supp}(\mu_{\mathrm{bif}})$.

The goal of this section is to prove Theorem~\ref{main_thm}, which asserts that $\cC^*\setminus\mathrm{Supp}(\mu_{\mathrm{bif}})$ does not consist only of parabolic-attracting maps, it contains parabolic-repelling maps as well (which is perhaps more surprising). To this end, we need to study the geometry of the dynamical plane of maps in $\mathcal{I}$. 

\begin{lemma}\label{annulus_modulus}
Let $a\in\mathcal{I}$. In the dynamical plane of $f_a$, the projection of the basin of infinity into the repelling Ecalle cylinder is an annulus of modulus $\frac{\pi}{\ln 3}$.
\end{lemma}
\begin{proof}
The proof essentially follows the arguments of \cite[Proposition~7]{HSS}, we adapt the proof for our setting. 

Since the Julia set of $f_a$ is connected, the projection of its basin of infinity into the repelling Ecalle cylinder is an annulus. Under the (correctly normalized) B{\"o}ttcher coordinate of $f_a$, which is a conformal isomorphism from the basin of infinity onto $\widehat{\C}\setminus\overline{\D}$, the access of the basin of infinity to the parabolic fixed point $0$ corresponds to the fixed point $1$ of $\zeta^3$. Moreover, under the B{\"o}ttcher map, the aforementioned annulus corresponds to the annulus obtained by gluing the two boundary curves in $\widehat{\C}\setminus\overline{\D}$ of the shaded region shown in Figure~\ref{modulus_pic}(Left) under the dynamics $\zeta\mapsto\zeta^3$. 

\begin{figure}[ht!]
\begin{center}
\includegraphics[scale=0.42]{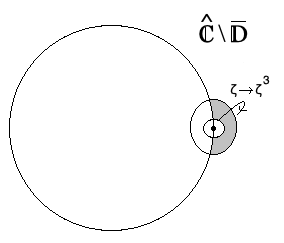}\ \includegraphics[scale=0.36]{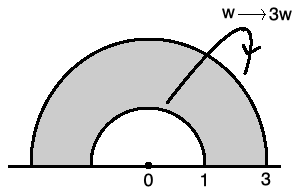}\ \includegraphics[scale=0.42]{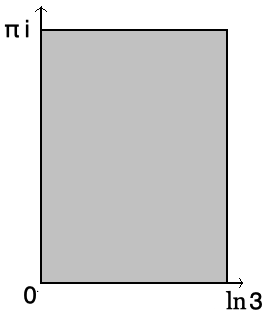}
\caption{Left: Under the B{\"o}ttcher coordinate, the basin of infinity in the repelling cylinder of $f_a$ maps to the shaded annulus where the indicated boundary curves are identified by $\zeta\mapsto\zeta^3$. Middle: Under the Koenigs linearizing coordinate, the annulus in the left figure maps to the shaded annulus where the indicated boundary curves are identified by $w\mapsto 3w$. Right: Under a suitable branch of logarithm, the annulus in the middle figure maps to the shaded rectangle where the vertical boundary curves are identified by translation by $\ln 3$.}
\label{modulus_pic}
\end{center}
\end{figure}

Under a Koenigs linearizing coordinate for the repelling fixed point $1$ of $\zeta^3$ (which has multiplier $3$), the annulus depicted in Figure~\ref{modulus_pic}(Left) corresponds to the annulus obtained by gluing the two boundary curves in the upper half-plane of the shaded region shown in Figure~\ref{modulus_pic}(Middle) under the dynamics $w\mapsto3w$. 

Finally under a suitable branch of logarithm, the annulus depicted in Figure~\ref{modulus_pic}(Middle) corresponds to the annulus obtained by gluing the two vertical boundary curves of the shaded rectangle shown in Figure~\ref{modulus_pic}(Right) under translation by $\ln3$. The modulus of the resulting annulus is clearly $\frac{\pi}{\ln 3}$. This completes the proof.

\end{proof}

Since $\frac{\pi}{\ln 3}>\frac{1}{2}$, it follows by \cite[Theorem I]{BDH} that the conformal annulus considered in Lemma~\ref{annulus_modulus} contains a round annulus of modulus at least $m=\frac{\pi}{\ln 3}-\frac{1}{2}\approx 2.3596$ (centered at the origin). Hence, due to real symmetry, there is an interval $(-m/2,m/2)$ of repelling Ecalle heights such that in the repelling Ecalle cylinder, the round cylinder $\R/\Z\times(-m/2,m/2)$ is contained in the projection of the basin of infinity (see Figure~\ref{cylinders_pic}). 

The proof of the main theorem makes essential use of the above geometric property of the basin of infinity (of the maps in $\mathcal{I}$) and bi-criticality of cubic polynomials. The rough idea of the proof is as follows. We construct a suitable sub-interval $I\subset\mathcal{I}$ (consisting of parabolic-repelling parameters) such that if the lifted phase of a nearby map has small imaginary part, then at least one critical point escapes to infinity (which is a consequence of the fact that the basin of infinity occupies a definite annulus). On the other hand, if the imaginary part of the lifted phase is large enough to prohibit a critical point from escaping, then there is an attracting fixed point. Thus, no map close to $I$ can be Misiurewicz.

\begin{figure}[ht!]
\begin{center}
\includegraphics[scale=0.32]{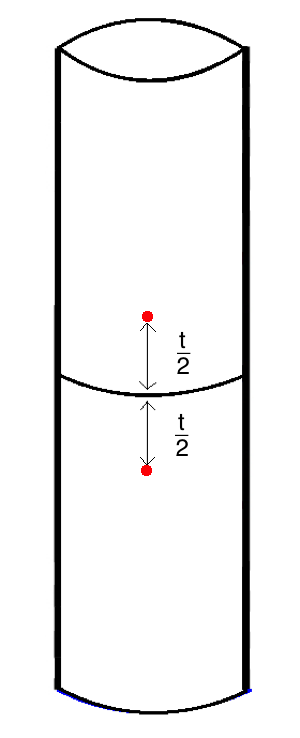}\hspace{2cm} \includegraphics[scale=0.325]{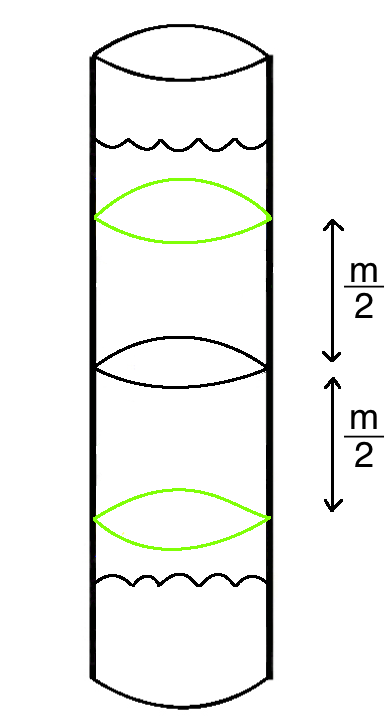}
\caption{Left: The attracting cylinder of $f_{a(t)}$ with the critical points marked. Right: The projection of the basin of infinity of $f_{a(t)}$ into repelling Ecalle cylinder contains the round cylinder $\R/\Z\times(-m/2,m/2)$.}
\label{cylinders_pic}
\end{center}
\end{figure}

For the rest of the paper, we fix an $\epsilon$ sufficiently small; for instance with $0<\epsilon<0.01$. Recall that the \emph{shift locus} is the set of maps with both critical points escaping.

\begin{lemma}\label{height_para_rep}
Let $0<t<m-2\epsilon$. Then $f_{a(t)}$ lies on the boundary of the shift locus.
In particular, $f_{a(t)}$ is parabolic-repelling; i.e. $a(t)\in(1,\sqrt{3})$. 
\end{lemma}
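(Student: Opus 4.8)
The plan is to show that the critical Ecalle height being small forces the critical points of $f_{a(t)}$ to sit so high (in absolute value of imaginary part) in the attracting Ecalle cylinder, after passing through the horn map, that they project into the round sub-cylinder $\R/\Z\times(-m/2,m/2)$ of the repelling cylinder which we already know is occupied by the basin of infinity. Concretely, recall from Proposition \ref{para_arc} that for $f_{a(t)}$ the critical points $c_\pm(a(t))$ have attracting Ecalle height $\pm t/2$. Since both critical orbits converge to the parabolic point, their orbits eventually enter any attracting petal, and by iterating we may push the critical value representatives arbitrarily high up the attracting cylinder while keeping the Ecalle height fixed at $\pm t/2$. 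The key point is that the attracting and repelling Ecalle cylinders are glued, near their ends, by the lifted horn maps $H_a^\pm$, which by the discussion in Section \ref{para_perturb} satisfy $H_a^\pm(\zeta)\approx \zeta \mp i\pi\,\resit(f_a)$ as $\im\zeta\to\pm\infty$. So a point of attracting Ecalle height $t/2$ (taken far up the cylinder) is sent by $H_a^+$ to a point of repelling Ecalle height approximately $t/2 - \pi\re(\resit(f_a))$, with an error that tends to $0$ as we go up.

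Next I would argue that if $t/2 < m/2 - \epsilon$ and the image height under the horn map still lands in $(-m/2,m/2)$, then the critical point's orbit meets the basin of infinity and hence escapes — but of course one must worry that $\pi\re(\resit(f_a))$ could shift the height out of the window. Here is where Lemma \ref{upper_bound} enters: it tells us $\resit(f_a)<2/3$, and since $a(t)$ is real the résidu itératif is real, so $\pi\re(\resit(f_{a(t)})) = \pi\,\resit(f_{a(t)}) < 2\pi/3 \approx 2.094$. Combined with $m \approx 2.3596$, one checks that for $0 < t < m - 2\epsilon$ the repelling Ecalle height $t/2 - \pi\,\resit(f_{a(t)})$ of the image of $c_+$ (and symmetrically $-t/2 + \pi\,\resit$ for $c_-$) still lies strictly inside $(-m/2, m/2)$ — wait, this needs the lower bound on $\resit$ as well. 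Actually the cleaner route: one of the two critical points works. If $\resit(f_{a(t)}) \le 0$ (parabolic-attracting or neutral case), the conclusion of the lemma is a separate matter; in the range we care about $\resit$ is positive and bounded by $2/3$, so $t/2 - \pi\resit \in (t/2 - 2\pi/3,\, t/2)$. For this interval to meet $(-m/2,m/2)$ it suffices that $t/2 - 2\pi/3 < m/2$, i.e. $t < m + 4\pi/3$, which holds trivially, AND that $t/2 > -m/2$, trivially true. But we need a point of this height to actually be reachable and to lie in the basin of infinity — so I would instead use the full flexibility: by climbing up the attracting cylinder and applying $H_a^+$ once, we realize, asymptotically, every repelling Ecalle height in a half-line $(t/2 - \pi\resit(f_{a(t)}) - \delta, \infty)$ as images of points on the forward orbit of $c_+$; intersecting with $(-\infty, m/2)$ gives a nonempty window precisely because $t/2 - \pi\resit(f_{a(t)}) < m/2$, which follows from $t < m - 2\epsilon < m$ and $\resit > 0$. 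Any such point lies in the basin of infinity, so $c_+$ escapes; by complex conjugation $c_-$ escapes too, so $f_{a(t)}$ is in the closure of the shift locus.

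Finally, since $f_{a(t)}$ itself is parabolic (not in the shift locus) but lies on its boundary, and since shift-locus maps and escape-locus maps near a $\mathrm{Per}_1(1)$-parameter force the parabolic point to be parabolic-repelling (a small perturbation inside $\mathrm{Per}_1(1)$ that makes a critical point escape cannot happen if $\resit \le 0$, as then both critical points remain trapped in the parabolic basin's perturbation — compare the parabolic-attracting case handled at the start of Section \ref{proof_main_thm}, where every nearby map has a non-repelling fixed point and hence cannot be in the shift locus), we conclude $\re(\resit(f_{a(t)})) > 0$, i.e. $a(t) \in (1,\sqrt 3)$.

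I expect the main obstacle to be the bookkeeping at the interface between the two cylinders: making precise that a critical orbit point of a given attracting Ecalle height, once pushed high enough up the attracting cylinder and transported by the lifted horn map $H_a^+$, genuinely lands at repelling Ecalle height close to $t/2 - \pi\resit(f_{a(t)})$ with controllable error, and that such a point is in the basin of infinity rather than merely close to it. This requires knowing that the basin of infinity's projection contains an \emph{open} round sub-cylinder (which \cite[Theorem I]{BDH} provides) so that the asymptotic approximation $H_a^+(\zeta) = \zeta - i\pi\resit(f_a) + o(1)$ suffices once we are far enough up. The constant $\epsilon$ is there precisely to absorb this $o(1)$ error and the gap between the conformal annulus and the round annulus it contains.
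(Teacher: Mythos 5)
There is a genuine gap, and it lies at the heart of your argument: the mechanism you use to place critical orbit points in the repelling cylinder does not exist for the unperturbed map. For $f_{a(t)}$ itself both critical points lie in the immediate parabolic basin, so they certainly do not escape; what the lemma asks is that $f_{a(t)}$ be a \emph{limit} of shift-locus maps, and this forces you to perturb. In the parabolic map there is no gate and no transit: an orbit in the attracting basin never enters the repelling petal, and the lifted horn maps $H_a^\pm=\psi_a^{\mathrm{att}}\circ\left(\psi_a^{\mathrm{rep}}\right)^{-1}$ are not a dynamical passage from the attracting cylinder to the repelling one --- they go in the opposite direction (they record how points near the upper/lower ends of the repelling cylinder re-enter the attracting basin), and the asymptotics $H_a^\pm(\zeta)\approx\zeta\mp i\pi\resit(f_a)$ hold only as $\im(\zeta)\to\pm\infty$, so they say nothing about points at Ecalle height $\pm t/2$. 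Your phrase ``a point of attracting Ecalle height $t/2$ taken far up the cylinder'' is self-contradictory: the Ecalle height \emph{is} the imaginary part, and iteration only translates points horizontally, so you can never ``climb'' while keeping the height fixed. Consequently your conclusion that $c_+$ escapes for $f_{a(t)}$ is false (it would contradict the critical points lying in the parabolic basin, and would put $f_{a(t)}$ \emph{in} the shift locus, which is impossible for a parabolic map), and the claim that $f_{a(t)}$ lies in the closure of the shift locus does not follow from your argument.

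The paper's proof instead perturbs: take $g_\delta=f_{a(t)}+\delta$ with $\delta>0$ real and small. This creates eggbeater dynamics, and because $g_\delta$ is real the transit map through the gate is a \emph{horizontal} translation in the Ecalle coordinates, so the critical orbits genuinely pass from the attracting to the repelling cylinder with their Ecalle heights preserved, i.e.\ close to $\pm t/2$, hence inside $\left(-m/2+2\epsilon/3,\,m/2-2\epsilon/3\right)$. Since the projection of the basin of infinity persists under small perturbation and still contains the round cylinder $\R/\Z\times\left(-m/2+\epsilon/3,\,m/2-\epsilon/3\right)$, both critical points of $g_\delta$ escape; thus $g_\delta$ lies in the shift locus and $f_{a(t)}$ on its boundary. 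Note that $\resit(f_{a(t)})$ plays no role in this step (it only enters later, in the phase restriction of Lemma \ref{phase_restriction}); your attempt to insert a shift by $\pi\resit$ here is a symptom of conflating the horn map with the transit map, and it is the $\epsilon$'s job to absorb the continuity of Fatou coordinates and of the basin of infinity under perturbation, not a horn-map error term. Your final paragraph --- that being a limit of shift-locus maps rules out the parabolic-attracting case (via the result of Buff quoted at the start of Section \ref{proof_main_thm}, or a fixed-point-index argument) --- is essentially the intended justification of the ``in particular'' clause, but it rests on the first part, which your proposal does not establish.
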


\begin{proof}
Let $0<t<m-2\epsilon$. We consider a small open set $U$ with $a(t)\in\partial U$ such that perturbing $a(t)$ in $U$ creates eggbeater dynamics. 

Note that the critical points and the Fatou coordinates depend continuously on the parameter throughout $U$. Hence, by shrinking $U$, we can assume that the imaginary part of $\psi^{\mathrm{att}}(c_+)$ is less than $(t/2+\epsilon/3)<(m/2-2\epsilon/3)$, and the imaginary part of $\psi^{\mathrm{att}}(c_-)$ is greater than $(-t/2-\epsilon/3)> (-m/2+2\epsilon/3)$. 

Moreover, the basin of infinity can not get too small when $a(t)$ is slightly perturbed (compare \cite[Theorem 5.1(a)]{D2}). Hence, by shrinking $U$ further, we can also assume that for every parameter in $U$, the round cylinder $\R/\Z\times\left(-m/2+\epsilon/3, m/2-\epsilon/3 \right)$ is contained in the projection of the basin of infinity into the repelling cylinder (note that in the repelling Ecalle cylinder of $a(t)$, the round cylinder $\R/\Z\times\left(-m/2, m/2 \right)$ is contained in the projection of the basin of infinity). 

Now consider the following perturbation of $f_{a(t)}$ in $U$:
\[
 g_\delta(z)= f_{a(t)}(z)+\delta \quad (\delta>0).
\]
Since $g_\delta$ is real, the transit map is a horizontal translation. In the dynamical plane of such a perturbed map, the critical orbits ``transit'' from the attracting Ecalle cylinder to the repelling cylinder and the imaginary parts of the Fatou coordinates are preserved in the process. Hence, by our choice of $U$, the critical points of $g_\delta$ have repelling Ecalle heights in $\left(-m/2+2\epsilon/3, m/2-2\epsilon/3 \right)$. Since $g_\delta$ is in $U$, any point in its repelling cylinder with repelling Ecalle height in $\left(-m/2+\epsilon/3, m/2-\epsilon/3 \right)$ is contained in the projection of the basin of infinity. Therefore, for such perturbations, both critical points lie in the basin of infinity. Hence, the perturbed maps $g_\delta$ lie in the shift locus, and both of the bifurcated fixed points of the perturbed maps are repelling. 

Therefore, $f_{a(t)}$ lies on the boundary of the shift locus, and is parabolic-repelling; i.e. $a(t)\in(1,\sqrt{3})$. 
\end{proof}

Suppose that $a(t)\in\mathcal{I}$ with $0<t<m-2\epsilon$. Consider a small (eggbeater-type) perturbation of $f_{a(t)}$ with associated transit map $T_\sigma$. It follows from the proof of Lemma~\ref{height_para_rep} that if the lifted phase $\sigma$ of the perturbed map is real, then both of its critical points escape to infinity. In the next lemma, we look at the other side of the story. More precisely, we study perturbed maps whose lifted phase $\sigma$ has a large imaginary part.

\begin{lemma}\label{phase_restriction}
If $\im(\sigma)>\pi\resit(f_{a(t)})$, then the perturbed map has an attracting fixed point. 
\end{lemma}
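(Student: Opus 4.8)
The plan is to show that under the hypothesis $\im(\sigma) > \pi\,\resit(f_{a(t)})$, the return map on one of the ends of the repelling cylinder has an attracting fixed point, which pulls back to an attracting fixed point of the perturbed cubic. Recall from Section \ref{para_perturb} that the return map at the top end, expressed in Fatou coordinates, is the composition of the lifted horn map $H^+$ with the translation $T_\sigma$, and that $H^+(\zeta)\approx \zeta - i\pi\,\resit(f_{a(t)})$ as $\im(\zeta)\to+\infty$ (with the analogous asymptotics at the bottom end involving $+i\pi\,\resit$). After conjugating by $\exp:\zeta\mapsto e^{2\pi i \zeta}$, this return map becomes a germ $\mathcal{R}^+$ at $0$ that is close to $e^{2\pi i\sigma}\mathfrak{h}^+_{a(t)}$, and $(\mathfrak{h}^+_{a(t)})'(0) = e^{2\pi^2 \resit(f_{a(t)})}$. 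So the multiplier of $\mathcal{R}^+$ at its fixed point $0$ is approximately
\[
  e^{2\pi i \sigma}\cdot e^{2\pi^2 \resit(f_{a(t)})},
\]
whose modulus is approximately $e^{-2\pi\,\im(\sigma)}\, e^{2\pi^2\resit(f_{a(t)})} = e^{-2\pi(\im(\sigma) - \pi\,\resit(f_{a(t)}))}$. The hypothesis $\im(\sigma) > \pi\,\resit(f_{a(t)})$ makes this modulus strictly less than $1$, so $0$ is an attracting fixed point of $\mathcal{R}^+$.

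First I would make the asymptotic statement quantitative on a definite upper half-plane: choose $M$ large enough (depending only on $f_{a(t)}$, by Lemma \ref{upper_bound} which bounds $\resit$ away from above, hence bounds the geometry of the horn maps uniformly) so that on $\{\im(\zeta) > M\}$ the lifted horn map $H^+$ satisfies $|H^+(\zeta) - \zeta + i\pi\,\resit(f_{a(t)})| < \eta$ for a small $\eta$ to be fixed. Then the lifted return map $H^+ \circ T_\sigma$ sends $\zeta \mapsto \zeta + \sigma - i\pi\,\resit(f_{a(t)}) + O(\eta)$; since $\im(\sigma - i\pi\,\resit) = \im(\sigma) - \pi\,\re(\resit)$... wait, one must be careful that $\resit$ is real here, as $a(t)$ is real, so $\resit(f_{a(t)}) = 1 - 1/a(t)^2 \in \mathbb{R}$, and the hypothesis is the honest real inequality $\im(\sigma) > \pi\,\resit(f_{a(t)})$. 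Under this inequality, $H^+ \circ T_\sigma$ moves points strictly upward in imaginary part by a definite amount (for $\eta$ small), so it maps $\{\im(\zeta) > M\}$ into itself and is a contraction in the hyperbolic metric of that half-plane once we also control the derivative; by the Denjoy–Wolff theorem (or directly, the Schwarz lemma for the half-plane together with the fact that the map has no boundary fixed point on $\{\im\zeta = M\}\cup\{\infty\}$ — the escape upward rules out $\infty$ being a Denjoy–Wolff point) it has a unique attracting fixed point $\zeta_*$ with $\im(\zeta_*) > M$.

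The next step is to transport $\zeta_*$ back to the dynamical plane: $\zeta_*$ corresponds, via $(\psi^{\mathrm{rep}})^{-1}$, to a periodic point of the perturbed cubic whose cycle consists of the orbit segment realizing the return map, and whose multiplier equals the multiplier of the return map at $\zeta_*$, namely $(H^+\circ T_\sigma)'(\zeta_*)$. I would argue this derivative has modulus $< 1$: differentiating the asymptotic expansion, $(H^+)'(\zeta)\to 1$ as $\im\zeta\to+\infty$, so on $\{\im\zeta>M\}$ (after enlarging $M$) we have $|(H^+\circ T_\sigma)'| = |(H^+)'(T_\sigma\zeta)| < 1 + \eta'$; combined with the contraction-in-imaginary-part estimate and a Koebe-type bound this forces the multiplier to be strictly inside the unit disk. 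Hence the perturbed map has an attracting cycle; finally, I would note that by the structure of eggbeater perturbations the relevant cycle here is actually a \emph{fixed point} — the return map at the top end corresponds to one of the two fixed points into which $0$ splits — so the perturbed cubic has an attracting fixed point, as claimed.

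The main obstacle I anticipate is the passage from "the return/horn map has an attracting fixed point" to "the cubic polynomial has an attracting \emph{fixed point}" rather than merely an attracting \emph{cycle}: one must use the specific fact that in eggbeater dynamics the parabolic fixed point $0$ perturbs to exactly two nearby fixed points sitting on the boundaries of $V^{\mathrm{att}}$ and $V^{\mathrm{rep}}$, and that the fixed point on $\partial V^{\mathrm{rep}}$ at the top end is precisely the one detected by the fixed point of $\mathcal{R}^+$ (equivalently, its multiplier is $(\mathfrak{h}^+)'(0)\cdot e^{2\pi i \sigma}$ up to small error — this is where the continuity of persistent Fatou coordinates and horn maps in the parameter, recalled in Section \ref{para_perturb}, is essential). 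A secondary technical point is making all the "$\approx$" and "$O(\eta)$" estimates uniform as the perturbation $\to 0$, which is handled by fixing $M$ first using only $f_{a(t)}$ and then shrinking the perturbation neighborhood $U$ so that the perturbed horn maps stay $\eta$-close to $H^\pm_{a(t)}$ on $\{|\im\zeta| > M\}$.
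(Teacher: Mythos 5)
Your first paragraph is exactly the paper's argument: the multiplier of $\mathcal{R}^+$ at its fixed point $0$ is close to $e^{2\pi i\sigma}\bigl(\mathfrak{h}^+_{a(t)}\bigr)'(0)$, whose modulus $e^{-2\pi(\im(\sigma)-\pi\resit(f_{a(t)}))}$ is less than $1$ under the hypothesis, so $0$ is an attracting fixed point of $\mathcal{R}^+$; since the upper end of the repelling cylinder corresponds to one of the two simple fixed points into which the parabolic point splits, that fixed point of the perturbed cubic is attracting. Had you stopped there (with that last correspondence made explicit), the proof would be complete and identical to the paper's.

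The attempted rigorization that follows, however, contains a step that fails. A self-map of the half-plane $\{\im\zeta>M\}$ of the form $\zeta\mapsto\zeta+\sigma-i\pi\resit(f_{a(t)})+O(\eta)$ with $\im(\sigma)-\pi\resit(f_{a(t)})>0$ moves every point upward by a definite amount, so, like an exact vertical translation, it has no interior fixed point at all; its Denjoy--Wolff point is precisely the boundary point at the upper end. Your claim that ``the escape upward rules out $\infty$ being a Denjoy--Wolff point'' is backwards: the upward drift forces the Denjoy--Wolff point to be that end, which in the exponential coordinate is exactly the point $w=0$, i.e.\ the fixed point of $\mathcal{R}^+$ you had already identified. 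Moreover, even if an interior fixed point $\zeta_*$ did exist, it would correspond under $\left(\psi^{\mathrm{rep}}\right)^{-1}$ to a periodic orbit of high period transiting through the gate, not to one of the two simple fixed points of the cubic; the fixed points of the perturbed map correspond to the ends of the Ecalle cylinders, not to interior points of the return map's domain, so the concluding step ``the relevant cycle here is actually a fixed point'' does not hold as formulated. The repair is simply to drop the Denjoy--Wolff detour: use the end fixed point $0$ of $\mathcal{R}^+$, whose multiplier estimate you already established, together with the correspondence (via the continuity of the persistent Fatou coordinates and horn maps) between the ends of the cylinders and the two simple fixed points of the eggbeater, which is how the paper concludes.
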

\begin{proof}
For sufficiently small perturbations, the absolute value of the multiplier of the `return map' $\mathcal{R}^+$ at the origin is close to 
\[
 \vert e^{2\pi i\sigma}\cdot\left(\mathfrak{h}_{a(t)}^+\right)'(0)\vert=\vert e^{2\pi i\sigma}\cdot e^{2\pi^2\resit(f_{a(t)})}\vert=e^{-2\pi(\im(\sigma)-\pi\resit(f_{a(t)}))}.
\]
Therefore, if $\im(\sigma)>\pi\resit(f_{a(t)})$, then $0$ is an attracting fixed point of $\mathcal{R}^+$. It follows that one of the simple fixed points of the perturbed map is attracting.
\end{proof}

\begin{proof}[Proof of Theorem~\ref{main_thm}]
Choose $a(t)\in\mathcal{I}$ with $$4\pi/3-m+2\epsilon<t<m-2\epsilon.$$ This is possible because $4\pi/3-m+2\epsilon\approx1.8292+2\epsilon<1.85$, and $m-2\epsilon\approx2.3596-2\epsilon>2.33$. By Lemma~\ref{height_para_rep}, $f_{a(t)}$ is parabolic-repelling.

Consider a perturbation of $f_{a(t)}$ in the connectedness locus.
If it is not eggbeater-type, then we have an attracting or parabolic fixed point.
So we may assume this is an eggbeater-type perturbation.
Let the transit map be $T_\sigma$, for some $\sigma\in\C$. We can assume that $\im(\sigma)>0$ (the other case is symmetric).

Note that $\psi^{\mathrm{att}}_{a(t)}(c_{-}(a(t)))=- t/2$. Under small perturbation, the imaginary part of the attracting Fatou coordinate of $\psi^{\mathrm{att}}(c_{-})$ lies in $(-t/2-\epsilon/2,-t/2+\epsilon/2)$. Since the perturbed map is in the connectedness locus, the critical point must not land in the basin of infinity after exiting through the gate. But for a small perturbation, the basin of infinity occupies at least the cylinder $\R/\Z\times(-m/2+\epsilon/2,m/2-\epsilon/2)$ in the repelling cylinder. Note that $(-t/2-\epsilon/2)>(-m/2+\epsilon/2)$. Hence, the imaginary part of the lifted phase must be large enough to push the `lower' critical point sufficiently up so that it avoids the basin of infinity; i.e. $$- t/2+\epsilon/2+\im(\sigma)\geq m/2-\epsilon/2;$$ $$\mathrm{or},\quad \im(\sigma)\geq m/2+t/2-\epsilon.$$

By our choice of $t$ and Lemma~\ref{upper_bound}, we have that $$m/2+t/2-\epsilon>2\pi/3>\pi\resit(f_{a(t)}).$$ This means that the imaginary part of $\sigma$ is larger than $\pi\resit(f_{a(t)})$; i.e. $\im(\sigma)>\pi\resit(f_{a(t)})$. But then Lemma~\ref{phase_restriction} forces the perturbed map to have an attracting fixed point.

Hence $I:=a((4\pi/3-m+2\epsilon,m-2\epsilon))\subset\mathcal{I}$ consists of parabolic-repelling parameters that are not contained in $\mathrm{Supp}(\mu_\mathrm{bif})$. By Lemma~\ref{para_in_bif_locus}, we conclude that $I\subset\cC^*\setminus\mathrm{Supp}(\mu_\mathrm{bif})$.

\end{proof}

\begin{remark}
\begin{enumerate}
\item In the last sentence of the proof of Theorem~\ref{main_thm}, one can avoid appealing to Lemma~\ref{para_in_bif_locus}. Indeed, by Lemma~\ref{height_para_rep}, the interval $I$ is on the boundary of the shift locus and this directly implies that $I$ lies in the bi-activity locus $\mathcal{C}^\ast$.

\item Similar techniques can be used to prove the existence of parabolic-repelling biquadratic polynomials (lying on the parabolic arcs of period one of the tricorn) outside the support of the bifurcation measure, compare \cite[Theorem 1.2]{IM1}.
 
 \item If a cubic polynomial has a Siegel disk containing a post-critical point, then the corresponding map belongs to the bi-activity locus, and admits a disk of quasi-conformal deformations in the parameter space. It will be interesting to know if such parameters always lie in the support of the bifurcation measure.
 
 \item Our proof works only when the modulus of the basin of infinity in the repelling Ecalle cylinder is sufficiently large. It seems unlikely to have such an interval when the modulus is small, i.e., if the degree of the map or the period of the parabolic periodic point is large.
\end{enumerate}
\end{remark}

\bibliographystyle{alpha}
\bibliography{bifurcation}

\end{document}